\documentclass[11pt]{amsart}
\usepackage[left=2cm,top=2cm,right=3cm,nofoot]{geometry}
\usepackage{amsmath,mathtools}%
\usepackage{amsfonts}%
\usepackage{amssymb}%
\usepackage{graphicx}
\usepackage{esint}
\usepackage{hyperref}
\usepackage{enumitem}
\usepackage{accents}
\usepackage{etoolbox}
\patchcmd{\subsection}{-.5em}{.5em}{}{}
\newtheorem{theorem}{Theorem}[section]
\theoremstyle{plain}

\newtheorem{definition}[theorem]{Definition}

\newtheorem{lemma}[theorem]{Lemma}

\newtheorem{remark}[theorem]{Remark}

\numberwithin{equation}{section}
\theoremstyle{plain}
\newtheorem{claim}{Claim}
\usepackage{etoolbox}
\AtEndEnvironment{proof}{\setcounter{claim}{0}}

\begin{document}

\title[]{Zero modes on product Riemannian manifolds}

\author{Jurgen Julio-Batalla}
\address{ Universidad Industrial de Santander, Carrera 27 calle 9, 680002, Bucaramanga, Santander, Colombia}
\email{ jajuliob@uis.edu.co}
\thanks{ }

\begin{abstract} 
This paper is concerned with the zero mode equation $D_g\varphi=iA\cdot\varphi$ on product of closed spin manifolds $(M_1^{n_1}\times M_2^{n_2},g_1+g_2,\sigma)$ of dimensions $n_1\leq n_2$ respectively. Here $A$ is a real vector field on $M^n=M_1^{n_1}\times M_2^{n_2}$. Under non-increasing condition on $|\varphi|$ we prove that $$\parallel A\parallel_n^2\geq\frac{n_2}{4(n_2-1)}Y(M^n,[g]),$$ where $Y(M^n,[g])$ is the Yamabe constant of $(M^n,g)$. This estimate is sharp in even dimensions. We also obtain a similar estimate for non trivial solutions of the zero mode type equation $D_g\varphi=f\varphi$, where $f$ is a scalar function.
 
\end{abstract}

\maketitle

\section{Introduction}
On the product of closed spin manifolds $(M^n=M_1^{n_1}\times M_2^{n_2},g=g_1+g_2,\sigma)$ we are interested in the zero mode equation
\begin{equation}\label{Zero}
D_g\varphi=iA\cdot\varphi,
\end{equation}

where $A$ is a real vector field, $\varphi$ is a spinor field on $(M^n,g)$ and $D_g$ is the Dirac operator respect to the metric $g$. The action ``$\cdot$" of $A$ is via the Clifford multiplication.

This equation appears in various contexts in mathematical physics. For instance in the study of the stability of an atom interacting with a spin-magnetic field (see \cite{Frohlich}). In such settings, the existence of a  non-trivial pair $(\varphi,A)$ satisfying \eqref{Zero} leads to obstructions issues.

The first example of a zero mode pair $(\varphi,A)$ was constructed by Loss-Yau in \cite{Loss-Yau} in $\mathbb{R}^3$. Since then, considerable effort has been devoted to understanding the structure of the equation \eqref{Zero}. For an overview of recent developments on zero mode equation, we refer the reader to   \cite{Frank-Loss1} and the references therein.

A significant advancement was made by Frank-Loss in \cite{Frank-Loss2}, who studied equation \eqref{Zero}  in $\mathbb{R}^n$ with respect to the Euclidean metric. They found a necessary condition for a non-trivial solution $(\varphi, A)$. More precisely, if $(\varphi,A)$ is a non-trivial solution then 
\begin{equation}\label{FrankInequality}
\parallel A\parallel_n^2\geq \frac{n}{n-2}S_n,
\end{equation}

where $S_n$ is the optimal Sobolev constant in $\mathbb{R}^n$. 

Examples achieving equality in \eqref{FrankInequality} include the explicit pairs from \cite{Loss-Yau}  in dimension $n=3$ and their generalizations to higher odd dimensions found by Dunne-Mi in \cite{Dunne}. Moreover, Frank-Loss  in \cite{Frank-Loss2} proved that these examples are  the only solutions for which the equality holds, showing that \eqref{FrankInequality} is not optimal in even dimensions.

Recently, this result has been extended to the setting of general spin manifolds. J. Reuss in \cite{Reus} adapted the method of Frank-Loss and obtained that 
\begin{equation}\label{ReusTheorem}
\parallel A\parallel_n^2\geq \frac{n}{4(n-1)}Y(M^n,[g]),
\end{equation}

where $Y(M^n,[g])$ is the Yamabe constant of $(M^n,g)$.

Very recently, Wang-Zhang in \cite{Wang} provided a complete classification of the equality case. They proved that equality holds if and only if $(M^n,g)$ is a Sasaki-Einstein manifold. 

From this characterization, a natural question arises: under what geometric or analytic conditions can the sharp inequality \eqref{ReusTheorem} be improved?

One remarkable feature of the zero mode equation \eqref{Zero}  that helps address this question is its conformal invariance. Specifically, if $(\varphi,A)$ solves \eqref{Zero} for a metric $g$, then a corresponding pair exists for any conformal metric in $[g]$, with the same $\parallel\cdot \parallel_n$ norm of the vector potential. Thus choosing a suitable metric in a given conformal class may lead to improve estimates for the $L^n$-norm of $A$.

To state precisely our main theorem, we introduce the following definition:

\begin{definition}
For any given couple of real valued functions $F,G$ we say that $F$ is non-increasing with respect to $G$ if $$\int_M\left\langle\nabla F,\nabla ln(u_1)\right\rangle dv_g\leq 0,$$
where $u_1$ is the first eigenfunction of the weighted linear problem $$-a_n\Delta u+s_gu=kG^2u$$
or equivalently, $u_1$ is the minimum of $$L(u)=\dfrac{\int_M\left(a_n|\nabla u|^2+s_gu^2\right)dv_g}{\int_MG^2u^2dv_g}$$
over the Sobolev space $H^1(M)$. Here $a_n=4(n-1)/(n-2)$.
\end{definition}

With this definition in hand, our main result improves the lower bound of $\parallel A\parallel_n^2$ when  the product metric $g=g_1+g_2$ admits a zero mode $(\varphi,A)$ such that the $g-$length of $\varphi$ is non-increasing with respect to $|A|_g$. 

\begin{theorem}
Let $(M^n,g,\sigma)$ be a closed Riemannian spin manifold of dimension $n=n_1+n_2$ which is the product of closed Riemannian manifolds $(M_1^{n_1},g_1)$ ,$(M_2^{n_2},g_2)$ with dimensions $n_1\leq n_2$ respectively. Assume that the product metric $g=g_1+g_2$ has positive scalar curvature. Let $(\varphi,A)$ be a non trivial solution of \eqref{Zero} for the metric $g$.

If $|\varphi|^2_g$ is non increasing with respect to $|A|_g$ then 
\begin{equation}\label{inequality1}
\parallel A\parallel_n^2\geq \frac{n_2}{4(n_2-1)}Y(M^n,[g]).
\end{equation}

Moreover, if the equality holds then: 
\begin{itemize}
\item [(a)] The vector potential $A$ has constant length,
\item [(b)] The metric $g$ is a Yamabe metric,
\item [(c)] $(M_2^{n_2},g_2)$ admits a non trivial real Killing spinor,
\item [(d)] $(M_1^{n_1},g_1)$ admits a parallel spinor when $n_1<n_2$
or either a parallel spinor or a real Killing spinor when $n_1=n_2$.
\end{itemize}

\end{theorem}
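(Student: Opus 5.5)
The approach combines the Frank--Loss/Reuss argument with a Kato-type estimate adapted to the product splitting. It also uses the weighted eigenvalue problem from the definition to bring in the Yamabe constant.

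First, write the spinor field as $\varphi$ and split the Dirac operator as $D_g = D_1 + D_2$. Here $D_i$ acts along $M_i$, using the standard identification of the spinor bundle of the product with a (twisted) tensor product of the factor bundles. Correspondingly, split the spinorial gradient $\nabla\varphi=\nabla^{(1)}\varphi+\nabla^{(2)}\varphi$. Next apply the twistor-type (Cauchy--Schwarz) inequality separately on each factor:
\begin{equation*}
|\nabla^{(j)}\varphi|^2\geq \frac{1}{n_j}|D_j\varphi|^2 .
\end{equation*}
Summing and optimizing, with $n_1\le n_2$, should give $|\nabla\varphi|^2\ge \frac{1}{n_2}\bigl(|D_1\varphi|^2+|D_2\varphi|^2\bigr)$. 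Since $D_1$ and $D_2$ anticommute with respect to the Hermitian structure, the cross term $2\,\mathrm{Re}\langle D_1\varphi,D_2\varphi\rangle$ vanishes pointwise up to a divergence. I would check this carefully, or use instead the symmetric choice of bundle identification making $D_1\varphi\perp D_2\varphi$ after integration. This gives the improved Friedrich-type inequality
\begin{equation*}
\int|\nabla\varphi|^2\ge\frac{1}{n_2}\int|D\varphi|^2 = \frac{1}{n_2}\int|A|^2|\varphi|^2 .
\end{equation*}
Here the last equality uses $|iA\cdot\varphi|=|A||\varphi|$. This is the point where $n$ is replaced by $n_2$ compared with \eqref{ReusTheorem}.

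Second, insert this into the Schr\"odinger--Lichnerowicz formula $D^2=\nabla^*\nabla+\frac{s_g}{4}$, integrated against $|\varphi|^{\alpha}$-type weights as in Frank--Loss/Reuss. Take $\psi=|\varphi|^{p}\varphi$-style test fields, or work with $u=|\varphi|^{\beta}$ directly, and combine this with the refined Kato inequality. The aim is an inequality of the form
\begin{equation*}
\int\bigl(a_n|\nabla u|^2+s_g u^2\bigr)\le \frac{4(n_2-1)}{n_2}\int |A|^2u^2 + (\text{term } \int\langle\nabla|\varphi|^2,\nabla\ln u_1\rangle).
\end{equation*}
Rather than the test function $u$ built from $\varphi$, compare with the first eigenfunction $u_1$ of $-a_n\Delta u+s_gu=k|A|^2u$. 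Then $k=\min L$, and we multiply the Lichnerowicz identity by $u_1^2/\ldots$, or use $\ln u_1$ as a gauge. The resulting gradient cross term is exactly $\int\langle\nabla|\varphi|^2,\nabla\ln u_1\rangle$, which the hypothesis makes $\le 0$. The conclusion will be $k\le \frac{4(n_2-1)}{n_2}$. Then H\"older gives
\begin{equation*}
Y\le \frac{\int(a_n|\nabla u_1|^2+s_gu_1^2)}{\|u_1\|_{2n/(n-2)}^2}\le k\|A\|_n^2 ,
\end{equation*}
which yields \eqref{inequality1}. I expect this second step to be the main obstacle: arranging the weight so that precisely the non-increasing condition absorbs the error term, while the coefficient stays $\frac{n_2}{4(n_2-1)}$ rather than degrading.

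Finally, for equality, every inequality must be sharp. H\"older equality forces $|A|$ to be a multiple of $u_1^{2/(n-2)}$, and Yamabe equality forces $u_1$ to be a Yamabe minimizer. Combined with the sharp Kato/twistor equalities this forces $u_1$ constant, giving (a) and (b). Equality in the factorwise twistor inequality for $j=2$ makes $\nabla^{(2)}_X\varphi=-\frac1{n_2}X\cdot D_2\varphi$, and with $|A|$ constant this yields a real Killing spinor on $M_2$, which is (c). For $n_1<n_2$ the optimization forces $D_1\varphi=0$ and $\nabla^{(1)}\varphi=0$, a parallel spinor on $M_1$. For $n_1=n_2$ both weights coincide and either alternative can occur, giving (d).
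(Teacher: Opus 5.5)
Your first step is the paper's. The pointwise identity $|\nabla\varphi|^2=|T\varphi|^2+\frac{1}{n_1}|D_{(1)}\varphi|^2+\frac{1}{n_2}|D_{(2)}\varphi|^2$, the vanishing of $\mathrm{Re}\int_M\langle D_{(1)}\varphi,D_{(2)}\varphi\rangle$, and the Lichnerowicz formula give
\[
\int_M s_g|\varphi|^2\le\frac{4(n_2-1)}{n_2}\int_M|A|^2|\varphi|^2 ,
\]
with defect $4\int_M|T\varphi|^2+4\bigl(\frac{1}{n_1}-\frac{1}{n_2}\bigr)\int_M|D_{(1)}\varphi|^2$. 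Your closing H\"older step is also the paper's.

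\textbf{Gap in the second step.} You leave this step open, and the route you lead with would fail. Weighting by $|\varphi|^{\alpha}$, or testing the Yamabe quotient with $u=|\varphi|^{\beta}$ plus a refined Kato inequality, is the Frank--Loss/Reuss mechanism (in effect a conformal change making $|\varphi|$ constant). With a non-constant weight $w$ the cross term no longer cancels: $2\,\mathrm{Re}\int_M w\langle D_{(1)}\varphi,D_{(2)}\varphi\rangle$ picks up terms in $\nabla w$, because the conformal metric is no longer a product. So that route only recovers Reuss's constant $\frac{n}{4(n-1)}$. This is why the non-increasing hypothesis is imposed, and why no Kato inequality appears in the paper.

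\textbf{The missing computation.} Do not multiply the Lichnerowicz identity by powers of $u_1$. Instead, integrate $-a_n\Delta u_1+s_gu_1=k|A|^2u_1$ against $|\varphi|^2/u_1$. Using $\frac{\Delta u_1}{u_1}=\Delta\ln u_1+|\nabla\ln u_1|^2$ and one integration by parts,
\[
\int_M s_g|\varphi|^2=k\int_M|A|^2|\varphi|^2+a_n\int_M|\varphi|^2|\nabla\ln u_1|^2-a_n\int_M\langle\nabla\ln u_1,\nabla|\varphi|^2\rangle\ \ge\ k\int_M|A|^2|\varphi|^2 ,
\]
where the inequality uses the hypothesis. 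Comparing with the first display gives $k\le\frac{4(n_2-1)}{n_2}$. Here $\int_M|A|^2|\varphi|^2>0$, since otherwise $D\varphi=0$ and $s_g>0$ would force $\varphi=0$.

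\textbf{Equality case.} This part inherits the gap.
\begin{enumerate}
\item[(a),(b)] In the paper, constancy of $u_1$ comes from the vanishing of $a_n\int_M|\varphi|^2|\nabla\ln u_1|^2$ in the identity above, not from a ``sharp Kato'' equality. Then $k=Y$ makes the constant function a Yamabe minimizer, so $g$ is Yamabe and $|A|^2=s_g/k$ is constant.
\item[(c),(d)] Constancy of $|A|$ is not what produces Killing spinors, since $\varphi$ is not a Dirac eigenspinor. The paper uses $T\varphi=0$ to get $D_{(i)}^2\varphi=\frac{n_i}{4(n_i-1)}s_{g_i}\varphi$. With $D_{(1)}\varphi=0$ when $n_1<n_2$, this gives $D^2\varphi=\mu\varphi$ with $\mu=\frac{n_2}{4(n_2-1)}s_g$. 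Then $\sqrt{\mu}\,\varphi+D\varphi$ is a Dirac eigenspinor attaining Alexandrov's eigenvalue bound on products, and (c), (d) follow from Alexandrov's characterization of that equality case.
\end{enumerate}
Your slice-wise reading of $\nabla^{(1)}\varphi=0$ as a parallel spinor on $M_1$ is fine. On $M_2$, however, you would still need three things you do not supply: the identity $D_{(2)}^2\varphi=\frac{n_2}{4(n_2-1)}s_{g_2}\varphi$, constancy of $s_{g_2}$, and an argument splitting such a spinor into real Killing spinors.
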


\begin{remark}
The non-increasing condition on $|\varphi|^2_g$ with respect to $|A|_g$ is a slight modification of the condition that $|\varphi|_g^2$ is non-increasing along the gradient flow generated by the first eigenfunction $u_1$ of the conformal Laplacian $L_g$ with respect to the weight $|A|_g^2$. In particular when $|\varphi|_g$ or $u_1$ is a constant function, this condition is clearly satisfied.
\end{remark}

\begin{remark}
There are explicit examples of zero modes that fall  in the equality of \eqref{inequality1}. For instance, in dimension $n=3$ the zero mode $(\varphi,A)$ found by Loss-Yau in \cite{Loss-Yau} satisfies  $$|\varphi(x)|=\frac{1}{1+|x|^2},\quad |A(x)|=\frac{3}{1+|x|^2}$$
with respect to the Euclidean metric $dx^2$.
By the conformally invariance of the zero mode equation \eqref{Zero} the pair  $(\varphi,A)$ induces a solution $(\varphi_0,A_0)$ of the equation on the sphere $(\mathbb{S}^3,g_0)$ with respect to the round metric $$g_0=\frac{4}{\left(1+|x|^2\right)^2}dx^2.$$
Consequently, $$|\varphi_0|_{g_0}=\frac{1}{2},\quad |A_0|_{g_0}=\frac{3}{2}.$$
Therefore the pair $(\varphi_0\otimes\varphi_0,A_0\oplus A_0)$ is a zero mode on the product $(\mathbb{S}^3\times\mathbb{S}^3,g_0+ g_0)$ such that $|\varphi_0\otimes\varphi_0|^2$ is constant (hence non increasing along $|A_0\oplus A_0|$). On the other hand,
$$\left(\int_{\mathbb{S}^3\times\mathbb{S}^3}|A_0\oplus A_0|^6dv_{g_0+g_0}\right)^{1/3}=2|A_0|^2_{g_0}vol(\mathbb{S}^3\times\mathbb{S}^3,g_0+g_0)^{1/3}=\frac{3}{8}Y(\mathbb{S}^3\times\mathbb{S}^3,[g_0+g_0]).$$
Hence the zero mode $(\varphi_0\otimes\varphi_0,A_0\oplus A_0)$ satisfies the equality in \eqref{inequality1}.

Similarly, using the zero modes obtained by Dunne-Mi in \cite{Dunne} we can find examples of zero modes on products of odd-dimensional spheres $(\mathbb{S}^{2n+1},g_0)$ that satisfy  the equality in the previous Theorem.

Another family of examples can be obtained by considering the product of an odd-dimensional sphere $(\mathbb{S}^{2n+1},g_0)$ with a circle $(\mathbb{S}^1,dt^2)$.

\end{remark}

\begin{remark}

The previous examples of zero mode $(\varphi,A)$ that achieve the equality in \eqref{inequality1} occur in even dimension. This contrasts with the sharp inequalities \eqref{FrankInequality} and \eqref{ReusTheorem} where we know that the equality case only occurs in odd dimensions (see \cite{Frank-Loss2,Wang}).
\end{remark}

In this paper we also consider a closely related equation (which we called the zero mode type equation)
\begin{equation}\label{zero-type}
D_g\varphi=f\varphi,
\end{equation}
where $f$ is a scalar function.

We obtain a very similar necessary condition for the existence of non-trivial solutions of \eqref{zero-type}:

\begin{theorem}
Let $(M^n,g,\sigma)$ be a closed Riemannian spin manifolds as the Theorem 1.2. Let $(\varphi,f)$ be a non trivial solution of \eqref{zero-type} for the product metric $g=g_1+g_2$.

If $|\varphi|_g^2$ is non-increasing with respect to $f$ then
\begin{equation}\label{inequality2}
\parallel f\parallel_n^2\geq \frac{n_2}{4(n_2-1)}Y(M^n,[g]).
\end{equation}
Moreover, the equality holds if and only if:
\begin{itemize}
\item [(a)] The scalar potential $f$ is constant,
\item [(b)] the metric $g$ is a Yamabe metric,
\item [(c)] $(M_2^{n_2},g_2)$ admits a non trivial real Killing spinor,
\item [(d)] $(M_1^{n_1},g_1)$ admits a parallel spinor when $n_1<n_2$
or either a parallel spinor or a real Killing spinor when $n_1=n_2$.
\end{itemize}
\end{theorem}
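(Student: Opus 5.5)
The plan is to derive a pointwise integral inequality from the Schrödinger--Lichnerowicz formula refined along the product splitting. We then compare it with the weighted eigenvalue problem defining the non-increasing condition (Definition 1.1, with $G=f$). Finally, Hölder's inequality converts the resulting eigenvalue bound into the Yamabe bound \eqref{inequality2}.

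\textbf{Step 1 (Bochner identity).} From \eqref{zero-type} we get $D_g^2\varphi=\nabla f\cdot\varphi+f^2\varphi$. Since Clifford multiplication by a real vector is skew-Hermitian, $\mathrm{Re}\langle\nabla f\cdot\varphi,\varphi\rangle=0$, so $\mathrm{Re}\langle D_g^2\varphi,\varphi\rangle=f^2|\varphi|^2$. Combining this with $D_g^2=\nabla^*\nabla+\frac{s_g}{4}$ gives
\begin{equation*}
f^2|\varphi|^2=|\nabla\varphi|^2+\frac{s_g}{4}|\varphi|^2-\frac12\Delta|\varphi|^2 ,
\end{equation*}
with $\Delta$ the analyst's Laplacian.

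\textbf{Step 2 (product twistor estimate).} Write $D_g=D_1+D_2$ for the partial Dirac operators of the factors; they anticommute up to the chirality twist used to build the product spin structure. Split $\nabla=\nabla^{(1)}+\nabla^{(2)}$ and use the twistor operators of each factor. This gives $|\nabla^{(j)}\varphi|^2\ge\frac1{n_j}|D_j\varphi|^2$, with equality iff $\varphi$ is a twistor spinor in the $M_j$-directions. Since $n_1\le n_2$, we get $|\nabla\varphi|^2\ge \frac1{n_2}(|D_1\varphi|^2+|D_2\varphi|^2)$.

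The key point, and what I expect to be the main obstacle, is to replace $|D_1\varphi|^2+|D_2\varphi|^2$ by $|D_g\varphi|^2=f^2|\varphi|^2$. Pointwise these differ by the cross term $2\mathrm{Re}\langle D_1\varphi,D_2\varphi\rangle$. The anticommutation $D_1D_2+D_2D_1=0$ only kills this term after integration, and here it appears with the weight needed in Step 3. I would write the cross term as a divergence plus terms involving $\nabla f$, and show that these integrate to zero or to a term of the right sign against the chosen weight. If that fails, I would first use the conformal invariance of \eqref{zero-type} to pass to a metric in $[g]$ adapted to $u_1$, and run the argument there.

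\textbf{Step 3 (bringing in $u_1$ and Hölder).} Granting Step 2, we obtain
\begin{equation*}
\frac{4(n_2-1)}{n_2}\int_M f^2|\varphi|^2\,w\,dv_g\ \ge\ \int_M s_g|\varphi|^2\,w\,dv_g-2\int_M w\,\Delta|\varphi|^2\,dv_g
\end{equation*}
for a weight $w$ built from $u_1$. Here $u_1>0$ is the first eigenfunction of $-a_n\Delta u+s_gu=kf^2u$, and $k>0$ because $s_g>0$. Dividing the eigen-equation by $u_1$ gives
\begin{equation*}
s_g=kf^2+a_n\left(\Delta\ln u_1+|\nabla\ln u_1|^2\right).
\end{equation*}
We substitute this into the previous inequality and integrate by parts. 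The Laplacian term becomes $\int\langle\nabla|\varphi|^2,\nabla\ln u_1\rangle$, which is $\le0$ by the non-increasing hypothesis. The $|\nabla\ln u_1|^2$ term has a definite sign. The bookkeeping of $w$ and the constants is to be arranged so that both these terms point the right way, and I will have to verify this. The outcome should be $k\le\frac{4(n_2-1)}{n_2}$. On the other hand,
\begin{equation*}
k=\frac{\int_M(a_n|\nabla u_1|^2+s_gu_1^2)\,dv_g}{\int_M f^2u_1^2\,dv_g}\ \ge\ \frac{Y(M^n,[g])}{\parallel f\parallel_n^2},
\end{equation*}
by Hölder's inequality $\int f^2u^2\le\|f\|_n^2\|u\|_{2n/(n-2)}^2$. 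Together these give \eqref{inequality2}.

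\textbf{Step 4 (equality case).} In case of equality, Hölder forces $|f|$ to be proportional to $u_1^{2/(n-2)}$, and the remaining equalities force $u_1$ to be constant. Hence $f$ is constant and $g$ is a Yamabe metric, giving (a) and (b). Equality in Step 2 makes $\varphi$ a twistor spinor along $M_2$, with eigenvalue of $D_2$ determined by $f$. Via the product decomposition of $\varphi$ this yields a real Killing spinor on $M_2$, which is (c). Along $M_1$, when $n_1<n_2$ the strict inequality $\frac1{n_1}>\frac1{n_2}$ forces $D_1\varphi=0$ and hence a parallel spinor. When $n_1=n_2$ it allows either a parallel or a Killing spinor, which is (d). For the converse, the product spinors of Remark 1.5 type with constant $f$ show that (a)--(d) give equality.
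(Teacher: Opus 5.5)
\textbf{There is a genuine gap at the crux, Steps 2--3.} You never fix the weight $w$, and you leave the cross term $2\,\mathrm{Re}\langle D_{(1)}\varphi,D_{(2)}\varphi\rangle$ unresolved (``granting Step 2'', ``I will have to verify this''). With a non-constant weight the plan cannot close as described:
\begin{itemize}
\item Using self-adjointness of $D_{(i)}$, the rule $D_{(i)}(w\psi)=\pi_i(\nabla w)\cdot\psi+wD_{(i)}\psi$, and $D_{(1)}D_{(2)}+D_{(2)}D_{(1)}=0$, you get $2\,\mathrm{Re}\int_M w\langle D_{(1)}\varphi,D_{(2)}\varphi\rangle=\mathrm{Re}\int_M\left(\langle\pi_2(\nabla w)\cdot D_{(1)}\varphi,\varphi\rangle+\langle\pi_1(\nabla w)\cdot D_{(2)}\varphi,\varphi\rangle\right)$. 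This has no sign.
\item The term $-2\int_M w\,\Delta|\varphi|^2=-2\int_M|\varphi|^2\Delta w$ is likewise uncontrolled.
\item Integrating $a_n\int_M w|\varphi|^2\Delta\ln u_1$ by parts produces $-a_n\int_M w\langle\nabla|\varphi|^2,\nabla\ln u_1\rangle$ plus a $\langle\nabla w,\nabla\ln u_1\rangle$ term. Definition 1.1 controls only the \emph{unweighted} integral $\int_M\langle\nabla|\varphi|^2,\nabla\ln u_1\rangle$.
\end{itemize}
The conformal-change fallback is not needed either.

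\textbf{The missing observation is that no weight is needed: take $w\equiv1$.} Then $\int_M\Delta|\varphi|^2=0$, and the cross term integrates to zero, i.e. $\int_M|D_{(2)}\varphi|^2=\int_M|D_g\varphi|^2-\int_M|D_{(1)}\varphi|^2$. Together with $|T\varphi|^2=|\nabla\varphi|^2-\frac1{n_1}|D_{(1)}\varphi|^2-\frac1{n_2}|D_{(2)}\varphi|^2$, this turns your Step 1 identity into
$\frac{4(n_2-1)}{n_2}\int_M f^2|\varphi|^2=\int_M s_g|\varphi|^2+4\int_M|T\varphi|^2+4\left(\frac1{n_1}-\frac1{n_2}\right)\int_M|D_{(1)}\varphi|^2$.

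The function $u_1$ then enters only through your own substitution $s_g=kf^2+a_n(\Delta\ln u_1+|\nabla\ln u_1|^2)$, multiplied by $|\varphi|^2$ and integrated. The $\Delta\ln u_1$ term becomes $-a_n\int_M\langle\nabla|\varphi|^2,\nabla\ln u_1\rangle\ge0$, which is exactly the hypothesis, and the gradient term is nonnegative. This gives $k\le\frac{4(n_2-1)}{n_2}$. You also need $\int_M f^2|\varphi|^2>0$: otherwise $D_g\varphi=0$, and $s_g>0$ forces $\varphi=0$. This is precisely the paper's identity (**), and your H\"older step then finishes as in the paper.

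The same identity supplies the rigidity for Step 4: $T\varphi=0$, $D_{(1)}\varphi=0$ if $n_1<n_2$, and $u_1$ constant. From there, passing to Killing or parallel spinors on the factors is not a ``product decomposition of $\varphi$'', since $\varphi$ need not be a product spinor. The paper invokes Alexandrov's characterization of equality in the product estimate $\lambda^2\ge\frac{n_2}{4(n_2-1)}s_g$. It applies this to $\varphi$, which is now a Dirac eigenspinor because $f$ is constant. The converse uses the same result to produce the eigenspinor; then $u_1$ is constant, so the non-increasing hypothesis holds trivially.
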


\begin{remark}
In contrast to the inequality \eqref{inequality1}, we completely characterize the equality case in the sharp inequality \eqref{inequality2}. 
\end{remark}

Finally, regarding regularity, due to regularity results in \cite{Frank-Loss1,Frank-Loss2,Wang} for zero modes $(\varphi,A)$, we assume throughout the paper that $|A|$ is in $L^n(M^n)$ and the spinor field $\varphi$ is in $L^p(M^n)$ for some $p>n/(n-1)$.

The plan of this paper is as follows. In section 2 we will recall some background material on spin manifolds and Penrose-type operators on product manifolds. In section 3 we will prove the inequality \eqref{inequality1}. The proof of the inequality \eqref{inequality2} is carried out in section 4.

\section{Preliminaries}
\subsection{Spin manifolds}
On a closed oriented Riemannian manifold $(M^n,g)$ we can define a $SO(n)$-principal bundle $P_{SO}M$ over $M$ of oriented $g-$orthonormal bases at $x\in M$. For $n\geq 3$, there exists the universal covering $\sigma:spin(n)\rightarrow SO(n)$ where $spin(n)$ is the group generated by even unit-length vector of $\mathbb{R}^n$ in the real Clifford algebra $Cl_n$ (the associative $\mathbb{R}-$algebra generated by relation $VW+WV=-2( V,W)$ for the Euclidean metric $(,)$). The manifold $M$ is called spin if there is a $spin(n)-$principal bundle $P_{spin}M$ over $M$ such that it is a double covering of $P_{SO}M$ whose restriction to each fiber is 
the double covering $\sigma:spin(n)\rightarrow SO(n)$. Such a double covering from $P_{spin}M$ to $P_{SO}M$, $\sigma$, is 
known as a spin structure. 

\medskip

There are  four special structures associated to a spin manifold $(M^n,g,\sigma)$:
\begin{enumerate}
\item A complex vector bundle $SM:=P_{spin}(M)\times_{\rho}\Sigma_n$ where $\rho:spin(n)\rightarrow Aut(\Sigma_n)$ is the restriction to $spin(n)$ of an irreducible representation $\rho:\mathbb{C}l_n\rightarrow End(S_n) $ of the complex Clifford algebra $\mathbb{C}l_n\simeq Cl_n\otimes_{\mathbb{R}} \mathbb{C}$,  $\Sigma_n\simeq\mathbb{C}^N$ and $N=2^{[n/2]}$.
\item The Clifford multiplication $m$ on $SM$ defined by
\begin{align*}
m:TM\times SM&\rightarrow SM\\
X\otimes\varphi&\mapsto X\cdot_g\varphi:=\rho(X)\varphi.
\end{align*} 
\item A Hermitian product $\langle\cdot,\cdot\rangle$ on sections of $SM$.

\item A Levi-Civita connection $\nabla$ on $SM$.
\end{enumerate}

All these structures are compatible in the following sense:
\begin{align*}
\langle X\cdot\varphi,\psi\rangle&=-\langle\varphi,X\cdot\psi\rangle, \\
X(\langle\varphi,\psi\rangle)&=\langle\nabla_X\varphi,\psi\rangle+\langle\varphi,\nabla_X\psi\rangle,\\
\nabla_X(Y\cdot\varphi)&=\nabla_XY\cdot\varphi+Y\cdot\nabla_X\varphi,
\end{align*}
for all $X,Y\in\Gamma(TM)$ and $\varphi,\psi\in\Gamma(SM)$.
Given the Levi-Civita  connection $\nabla:\Gamma(SM)\rightarrow\Gamma(Hom(TM,SM))$ and identifying $\Gamma(Hom(TM,SM))$ with $\Gamma(TM\otimes SM)$, we can define the Dirac operator $D_g$ as the composition of $\nabla$ with the Clifford multiplication $m$ i.e. $D_g:=m\circ \nabla$. For a local orthonormal frame $\{E_i\}$ we have $$D_g\varphi=\sum\limits_{i=1}^nE_i\cdot_g\nabla_{E_i}\varphi.$$

\subsection{Penrose-type operator on products}

We will recall some properties of a Penrose-type operator induced by the decomposition of $TM=TM_1\oplus TM_2$  and their relation with the Dirac operator. Details can be found in \cite{Alex}

Let $E_{1,1},E_{1,2},\cdots,E_{1,n_1},E_{2,1},E_{2,2},\cdots E_{2,n_2}$ be an adapted local orthonormal frame on $TM_1\oplus TM_2$ and let $\pi_i:TM\rightarrow TM_i$ be the orthogonal projection of $TM$ on $TM_i$ for $i=1,2$.

We can define the operator $$D_{(i)}\varphi:= \sum_{j=1}^{j=n_i}E_{i,j}\cdot\nabla_{\pi_iE_{i,j}} \varphi,$$
for $i=1,2$.

It is straightforward to prove the following properties of $D_{(i)}$

\begin{lemma}
\begin{itemize}
\item [(1)] $D=D_{(1)}+D_{(2)}$;
\item [(2)]$D_{(1)}D_{(2)}+D_{(2)}D_{(1)}=0$;
\item [(3)]$D^2=D^2_{(1)}+D^2_{(2)}$;
\item [(4)]$D_{(i)}$ is self adjoint (for $i=1,2$). 
\end{itemize}

\end{lemma}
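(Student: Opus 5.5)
The plan is to prove the four items directly from the definition $D_{(i)}\varphi=\sum_{j}E_{i,j}\cdot\nabla_{E_{i,j}}\varphi$. Throughout we work pointwise with an adapted local orthonormal frame. Because $g=g_1+g_2$ is a Riemannian product, we may choose $E_{1,j}$ to be lifts of local frames on $M_1$ and $E_{2,k}$ to be lifts of local frames on $M_2$. These frames satisfy three facts that drive the whole computation:
\begin{itemize}
\item $\nabla_{E_{1,j}}E_{2,k}=0$ and $\nabla_{E_{2,k}}E_{1,j}=0$;
\item $[E_{1,j},E_{2,k}]=0$;
\item the curvature term $R(E_{1,j},E_{2,k})$ vanishes, as an endomorphism of $TM$ and hence also on the spinor bundle.
\end{itemize}

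Item (1) is immediate. Summing the two partial sums over the full adapted frame gives exactly $\sum E_a\cdot\nabla_{E_a}\varphi=D\varphi$.

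For item (2), I would expand
\[
D_{(1)}D_{(2)}\varphi=\sum_{j,k}E_{1,j}\cdot\nabla_{E_{1,j}}\bigl(E_{2,k}\cdot\nabla_{E_{2,k}}\varphi\bigr).
\]
By the compatibility $\nabla_X(Y\cdot\varphi)=\nabla_XY\cdot\varphi+Y\cdot\nabla_X\varphi$ and $\nabla_{E_{1,j}}E_{2,k}=0$, this equals $\sum E_{1,j}\cdot E_{2,k}\cdot\nabla_{E_{1,j}}\nabla_{E_{2,k}}\varphi$. The other product symmetrically gives $\sum E_{2,k}\cdot E_{1,j}\cdot\nabla_{E_{2,k}}\nabla_{E_{1,j}}\varphi$. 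Since $E_{1,j}$ and $E_{2,k}$ are orthogonal, $E_{2,k}\cdot E_{1,j}=-E_{1,j}\cdot E_{2,k}$, so the sum becomes
\[
\sum_{j,k}E_{1,j}\cdot E_{2,k}\cdot\bigl(\nabla_{E_{1,j}}\nabla_{E_{2,k}}-\nabla_{E_{2,k}}\nabla_{E_{1,j}}\bigr)\varphi=\sum_{j,k}E_{1,j}\cdot E_{2,k}\cdot R^S(E_{1,j},E_{2,k})\varphi,
\]
using $[E_{1,j},E_{2,k}]=0$. The spinor curvature is $R^S(X,Y)=\tfrac14\sum g(R(X,Y)E_a,E_b)E_a\cdot E_b\cdot$. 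For a Riemannian product, $R(X,Y)=0$ whenever $X\in TM_1$ and $Y\in TM_2$, so the anticommutator vanishes. To avoid depending on the specific frame, one can note that $D_{(i)}$ is frame-independent, being the contraction of $\nabla$ restricted to $TM_i$ with Clifford multiplication. This is the step I expect to need the most care: one must justify that the frame is adapted and parallel in the mixed directions, and that the mixed curvature vanishes, both of which rest on the de Rham product structure.

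Item (3) follows from (1) and (2). Squaring $D=D_{(1)}+D_{(2)}$ gives $D^2=D_{(1)}^2+D_{(2)}^2+(D_{(1)}D_{(2)}+D_{(2)}D_{(1)})$, and the last term is zero by (2).

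For item (4), I would mimic the standard proof that $D$ is formally self-adjoint. Fix a point $x$ and choose the adapted frame to be synchronous at $x$, with $(\nabla E_{i,j})(x)=0$; this is possible on each factor separately. Define the vector field $X_i=\sum_j\langle\varphi,E_{i,j}\cdot\psi\rangle E_{i,j}$, which is tangent to $TM_i$. At $x$, using skew-adjointness of Clifford multiplication and metric compatibility,
\[
\langle D_{(i)}\varphi,\psi\rangle-\langle\varphi,D_{(i)}\psi\rangle=-\operatorname{div}X_i.
\]
Integrating over the closed manifold $M$ and applying the divergence theorem gives $\int\langle D_{(i)}\varphi,\psi\rangle=\int\langle\varphi,D_{(i)}\psi\rangle$. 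Here we use that the divergence of $X_i$ computed with the product metric agrees with its frame expression, because the mixed Christoffel terms vanish.
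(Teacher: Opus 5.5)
Your proposal is correct. The paper gives no proof: it calls these properties straightforward and refers to Alexandrov for details. Your direct verification is the standard computation that remark points to: vanishing of mixed covariant derivatives, brackets and curvature on the Riemannian product for (2), expanding $(D_{(1)}+D_{(2)})^2$ for (3), and the usual divergence argument with the vector field $X_i$ tangent to $TM_i$ for (4).
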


In this setting, it is defined the Penrose-type operator $$T_X\varphi=\nabla_X\varphi +\frac{1}{n_1}\pi_1(X)\cdot D_{(1)}\varphi+\frac{1}{n_2}\pi_2(X)\cdot D_{(2)}\varphi,$$
for all spinor field $\varphi$ and all vector field $X$.

This operator satisfies that \begin{equation}\label{decompositionT}
|T\varphi|^2=|\nabla\varphi|^2-\frac{1}{n_1}|D_{(1)}\varphi|^2-\frac{1}{n_2}|D_{(2)}\varphi|^2
\end{equation}

\section{Proof of Theorem 1.2}
In this section we will prove the main Theorem 1.2. We divide the Theorem in two parts.
First, we deal with the inequality \eqref{inequality1}. After, we will discuss the necessary conditions for which the equality holds in \eqref{inequality1}. 

The first part of Theorem 1.2 is equivalent to

\begin{theorem}
Let $(M^n=M_1^{n_1}\times M_2^{n_2},g=g_1+g_2,\sigma)$ be a product of closed spin manifolds of dimensions $n_1\leq n_2$. Assume that the scalar curvature $s_g$ is positive. Let $(\varphi,A)$ be a non trivial solution of the zero mode equation $$D_g\varphi=i\lambda A\cdot_g\varphi,\quad\int_M |A|^ndv_g=1. $$
If $|\varphi|^2$ is non-increasing with respect to $|A|$ then $$\lambda^2\geq \frac{n_2}{4(n_2-1)}Y(M^n,[g]).$$
\end{theorem}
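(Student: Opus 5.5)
The plan is to combine a Lichnerowicz-type estimate refined by the product Penrose operator with a weighted eigenvalue argument. The eigenfunction $u_1$ from the definition of ``non-increasing'' is what converts the $L^2$ information into the Yamabe constant.

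\emph{Step 1 (Penrose refinement).} From \eqref{decompositionT}, $|T\varphi|^2\geq 0$ and $n_1\leq n_2$, we get pointwise
$$|\nabla\varphi|^2\geq \tfrac{1}{n_1}|D_{(1)}\varphi|^2+\tfrac{1}{n_2}|D_{(2)}\varphi|^2\geq \tfrac{1}{n_2}\big(|D_{(1)}\varphi|^2+|D_{(2)}\varphi|^2\big).$$
By Lemma 2.1, $D_{(i)}$ is self-adjoint and $D_{(1)}D_{(2)}+D_{(2)}D_{(1)}=0$. Hence
$$2\,\mathrm{Re}\int\langle D_{(1)}\varphi,D_{(2)}\varphi\rangle=\int\langle (D_{(2)}D_{(1)}+D_{(1)}D_{(2)})\varphi,\varphi\rangle=0,$$
and therefore $\int(|D_{(1)}\varphi|^2+|D_{(2)}\varphi|^2)=\int|D\varphi|^2$.

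Inserting this into the integrated Schrödinger–Lichnerowicz formula $\int|D\varphi|^2=\int|\nabla\varphi|^2+\tfrac14\int s_g|\varphi|^2$ gives
$$\tfrac{n_2-1}{n_2}\int|D\varphi|^2\geq \tfrac14\int s_g|\varphi|^2 .$$
Since $|A\cdot\varphi|=|A||\varphi|$, the equation yields $|D\varphi|^2=\lambda^2|A|^2|\varphi|^2$. Consequently
$$\lambda^2\int|A|^2|\varphi|^2\geq \frac{n_2}{4(n_2-1)}\int s_g|\varphi|^2 .$$

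\emph{Step 2 (weighted eigenvalue).} Let $k_1=\min L$ with weight $G=|A|$, attained at $u_1>0$. Multiply $-a_n\Delta u_1+s_gu_1=k_1|A|^2u_1$ by $|\varphi|^2/u_1$ and integrate by parts. This gives
$$a_n\int\langle\nabla \ln u_1,\nabla|\varphi|^2\rangle-a_n\int|\varphi|^2|\nabla\ln u_1|^2+\int s_g|\varphi|^2=k_1\int|A|^2|\varphi|^2 .$$
The first term is $\leq0$ by the non-increasing hypothesis, and the second is $\leq 0$. Hence $\int s_g|\varphi|^2\geq k_1\int|A|^2|\varphi|^2$.

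\emph{Step 3 (comparison with Yamabe).} By Hölder and $\|A\|_n=1$, we have $\int|A|^2u^2\leq\|u\|^2_{2n/(n-2)}$. Therefore $L(u)\geq$ the Yamabe quotient of $u$, and so $k_1\geq Y(M^n,[g])$.

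\emph{Conclusion.} It remains to check that $\int|A|^2|\varphi|^2>0$. If it vanished, then $D\varphi=0$, and Lichnerowicz with $s_g>0$ would force $\varphi=0$, a contradiction. Dividing the combined inequality from Steps 1–3 by this quantity gives $\lambda^2\geq \frac{n_2}{4(n_2-1)}k_1\geq\frac{n_2}{4(n_2-1)}Y(M^n,[g])$.

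The main obstacle I expect is analytic rather than algebraic. One must justify existence, positivity and enough regularity of $u_1$ for a weight $|A|^2$ that is only in $L^{n/2}$. Here the positivity of $s_g$ gives coercivity, and the condition $|A|^2\in L^{n/2}$ gives a compact-enough embedding, via a concentration-compactness argument at the level $k_1\leq Y$ or an approximation of the weight. One must also justify the integrations by parts, with $\varphi$ only in $L^p$, $p>n/(n-1)$. For that I would invoke the regularity results cited in the paper \cite{Frank-Loss2,Wang} to upgrade $\varphi$ to $H^1\cap L^\infty$, and use cut-offs near any zeros of $u_1$ if necessary.
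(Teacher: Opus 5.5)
Your proof is correct and follows the paper's route. You refine the Schr\"odinger--Lichnerowicz formula with the product Penrose decomposition, test the weighted eigenvalue equation for $u_1$ against $|\varphi|^2/u_1$ under the non-increasing hypothesis, and use H\"older's inequality to get $k_1=I(M,g,|A|^2)\geq Y(M^n,[g])$. The differences are cosmetic: you chain three inequalities and discard $|T\varphi|^2$ and $(\frac{1}{n_1}-\frac{1}{n_2})|D_{(1)}\varphi|^2$ immediately, whereas the paper keeps them in the single identity (*) because it needs them for the equality analysis; and you explicitly check $\int_M|A|^2|\varphi|^2>0$, which the paper leaves implicit.
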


\begin{proof}
By the Schrödinger-Lichnerowitz formula $$0=\int_M\left(- \left\langle D^2_g\varphi,\varphi\right\rangle+\frac{s_g}{4}|\varphi|^2+|\nabla \varphi|^2\right)dv_g$$
Using Stoke Theorem and the zero mode equation we have $$0=\int_M\left(- \lambda^2|A|^2|\varphi|^2+\frac{s_g}{4}|\varphi|^2+|\nabla \varphi|^2\right)dv_g.$$
Taking the Penrose-type operator $T$ from the section 2, we decompose the term $|\nabla\varphi|^2$ as \eqref{decompositionT} in order to get
$$0=\int_M\left( - 4\lambda^2|A|^2+s_g\right)|\varphi|^2+\int_M 4|T\varphi|^2+\int_M\left(\frac{4}{n_1}|D_{(1)}\varphi|^2+\frac{4}{n_2}|D_{(2)}\varphi|^2\right).$$
Since each $D_{(i)}$ is self adjoint and $D^2=D^2_{(1)}+D^2_{(2)}$ we see $$\int_M|D_{(2)}\varphi|^2=\int_M|D\varphi|^2-\int_M|D_{(1)}\varphi|^2.$$
Therefore, from the two last equations we obtain
 
$$0=\int_M |\varphi|^2\left(4\left(\frac{1-n_2}{n_2}\right)\lambda^2|A|^2+s_g\right)+\int_M 4|T\varphi|^2+\int_M 4\left( \frac{1}{n_1}-\frac{1}{n_2}\right)|D_{(1)}\varphi|^2.$$
On the other hand, for any positive function $u$ we have
$$\int_Ma_n\left\langle\nabla ln(u),\nabla |\varphi|^2\right\rangle=\int_M-a_n|\varphi|^2\left(\frac{\Delta u}{u}-\frac{|\nabla u|^2}{u^2}\right).$$

Adding the last two equations we get the following integral identity (*)
\begin{align*}
0=\int_M\frac{|\varphi|^2}{u}\left(-a_n\Delta u+s_gu+4\left(\frac{1-n_2}{n_2}\right)\lambda^2|A|^2u\right)+\int_M 4|T\varphi|^2+\int_M 4\left( \frac{1}{n_1}-\frac{1}{n_2}\right)|D_{(1)}\varphi|^2\\
+\int_M a_n|\varphi|^2\frac{|\nabla u|^2}{u^2}-\int_Ma_n\left\langle\nabla ln(u),\nabla |\varphi|^2\right\rangle.
\end{align*}
Now consider the constant $I(M,g,|A|^2)$ defined by $$I(M,g,|A|^2)=\inf_{H^1(M^n)}\frac{\int_M\left( a_n|\nabla u|^2+s_gu^2\right)dv_g}{\int_M (|A|u)^2dv_g}.$$
i.e. the first eigenvalue of the linear weighted problem $-a_n\Delta u+s_gu=I|A|^2u.$

Let $u_1$ be the positive smooth function which realizes $I(M,g,|A|^2)$. By the non increasing assumption  on $|\varphi|^2$, we have $$-\int_Ma_n\left\langle\nabla ln(u_1),\nabla |\varphi|^2\right\rangle\geq 0.$$
In particular, inserting $u_1$ in the integral identity (*) we conclude that $$4\left(\frac{n_2-1}{n_2}\right)\lambda^2< I(M,g,|A|^2)$$
implies that $\varphi$ must be trivial.
Hence  $$4\left(\frac{n_2-1}{n_2}\right)\lambda^2\geq I(M,g,|A|^2)$$
To obtain a lower bound for $I$ note from normalization of $|A|$ and Hölder's inequality that $$\int_M (|A|u)^2dv_g\leq \left(\int_Mu^{2n/(n-2)}dv_g\right)^{(n-2)/n},$$
and hence

 $$I\geq \inf_{H^1(M)} \frac{\int_M\left( a_n|\nabla u|^2+s_gu^2\right)dv_g}{\left(\int_Mu^{2n/(n-2)}dv_g\right)^{(n-2)/n}}=Y(M^n,[g]). $$ 
Therefore $$\lambda^2\geq \frac{n_2}{4(n_2-1)}Y(M^n,[g]).$$
\end{proof}

To end this section we discuss a necessary condition to the equality in \eqref{inequality1}.

If $$\lambda^2=\frac{n_2}{4(n_2-1)}Y(M^n,[g])$$ then
$$\lambda^2=\frac{n_2}{4(n_2-1)}I(M,g,|A|^2)$$ and $I(M,g,|A|^2)=Y(M,[g])$.

The equation $\lambda^2=\frac{n_2}{4(n_2-1)}I(M,g,|A|^2)$ and the integral identity (*) imply that $T\varphi=0,\quad D_1\varphi=0$ (if $n_1<n_2$) and $u_1$ is a constant.

\begin{claim}\label{Claim1}
The equality $I(M,g,|A|^2)=Y(M,[g])$  and the fact that $u_1$ is constant occur if and only if  $|A|$ is a constant and $g$ is a Yamabe metric.
\end{claim}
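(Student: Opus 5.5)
We prove the two directions separately. Throughout, $|A|$ is normalized by $\int_M|A|^n\,dv_g=1$, and $u_1>0$ denotes the minimizer realizing $I(M,g,|A|^2)$. It satisfies
$$-a_n\Delta u_1+s_gu_1=I\,|A|^2u_1 .$$

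\emph{($\Rightarrow$)} Suppose $u_1\equiv c>0$. Then $\Delta u_1=0$, and the eigenvalue equation reduces to $s_g c=I|A|^2c$, i.e.
$$s_g=I\,|A|^2 \quad\text{pointwise.}$$
Next we use $I=Y(M,[g])$. Since $u_1$ is constant, the chain of inequalities in the proof of Theorem 3.1 must be an equality chain at $u=u_1$. In particular, the Hölder step
$$\int_M|A|^2u_1^2\le \Big(\int_M|A|^n\Big)^{2/n}\Big(\int_M u_1^{2n/(n-2)}\Big)^{(n-2)/n}$$
must hold with equality. Let us justify this. We have
$$Y\le \frac{\int_M(a_n|\nabla u_1|^2+s_gu_1^2)}{\|u_1\|_{2n/(n-2)}^2}\le \frac{\int_M(a_n|\nabla u_1|^2+s_gu_1^2)}{\int_M|A|^2u_1^2}=I=Y,$$
so both inequalities are equalities. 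Equality in Hölder forces $|A|^n$ to be proportional to $u_1^{2n/(n-2)}$, and $u_1$ is constant, so $|A|$ is constant. Then $s_g=I|A|^2$ is constant as well. Finally, the first equality says that the constant function $u_1$ attains the Yamabe quotient, i.e. $g$ minimizes the Yamabe functional in $[g]$. Hence $g$ is a Yamabe metric.

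\emph{($\Leftarrow$)} Suppose $|A|\equiv a$ is constant and $g$ is Yamabe, so $s_g$ is a positive constant. The normalization gives $a^n\,\mathrm{vol}(M)=1$. Since $|A|^2=a^2$ is constant, $I$ is the first eigenvalue of $-a_n\Delta+s_g$ divided by $a^2$. Its first eigenfunction is constant, because the first eigenvalue of the conformal Laplacian with constant $s_g$ is $s_g$, with constant eigenfunction. Hence $u_1$ is constant and $I=s_g/a^2=s_g\,\mathrm{vol}(M)^{2/n}$. Because $g$ is Yamabe, the constant function realizes $Y$, which gives $Y=s_g\,\mathrm{vol}(M)^{2/n}=I$.

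The main obstacle is the direction ($\Rightarrow$). The point that needs care is extracting equality in Hölder from the equality $I=Y$. This works because the evaluation at $u_1$ closes the inequality chain into an equality chain, rather than merely comparing the two infima. Once that is in place, the rest follows from Hölder's equality case together with the fact that $u_1$ is constant.
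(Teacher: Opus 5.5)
Your proof is correct and takes essentially the same approach as the paper. Both directions use the eigenvalue equation $s_g=I|A|^2$ for constant $u_1$, the fact that the constant function attains the Yamabe quotient exactly when $g$ is Yamabe, and $I=\lambda_1(L_g)\,\mathrm{vol}(M,g)^{2/n}$ when $|A|$ is constant. The differences are minor: you make explicit the equality chain at $u_1$ that the paper leaves implicit, and you get constancy of $|A|$ from the H\"older equality case, whereas the paper gets it from constancy of $s_g$ (since $g$ is Yamabe) together with $s_g=I|A|^2$.
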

\begin{proof}[Proof of the claim]
The equality $I(M,g,|A|^2)=Y(M,[g])$  and $u_1$ constant imply that $g$ must be a Yamabe metric. In particular $s_g$ is constant. Since $s_g=I(M,g,|A|^2)|A|^2$ we get that $|A|$ is constant.

Now if $g$ is a Yamabe metric and $|A|$ is constant then
$|A|=1/(vol(M,g))^{1/n}$ (from normalization  $\int_M |A|^ndv_g=1$).

Thus $$I(M,g,|A|^2)=\inf_{H^1}\frac{\int_M\left( a_n|\nabla u|^2+s_gu^2\right)dv_g}{\int_M (|A|u)^2dv_g}= \inf_{H^1}\frac{\int_M\left( a_n|\nabla u|^2+s_gu^2\right)dv_g}{\int_M u^2dv_g}vol(M,g)^{2/n}.$$
On the other hand $$\lambda_1(L_g)=\inf_{H^1}\frac{\int_M\left( a_n|\nabla u|^2+s_gu^2\right)dv_g}{\int_M u^2dv_g}$$
is the first eigenvalue of the conformal Laplacian $L_g$. 

Since that $g$ is a Yamabe metric we have $$\lambda_1(L_g)vol(M,g)^{2/n}=Y(M^n,[g]).$$

Hence $I(M,g,|A|^2)=Y(M^n ,[g])$.

In particular, $s_g=Y(M,[g])vol(M^n,g)^{-2/n}=I(M,g,|A|^2)|A|^2$ and $u_1$ is a positive constant.
\end{proof}

Now we have that the scalar curvatures  $s_g,s_{g_1}$ and $s_{g_2}$ are constants.

Similar to the classical twistor spinor (see for instance \cite{Baum}), the equation $T\varphi=0$ implies that $$D^2_{(i)}=\frac{n_i}{4(n_i-1)}s_{g_i}\varphi,$$
for $i=1,2$.

Therefore $$D^2=\frac{n_2}{4(n_2-1)}s_{g}\varphi.$$

Let $$\mu=\frac{n_2}{4(n_2-1)}s_{g}.$$

It is easy to see that $\psi:=\sqrt{\mu}\varphi+D\varphi$ is an eigenspinor of the Dirac operator associated to the eigenvalue $\sqrt{\mu}$.  

In \cite{Alex} Alexandrov proved that the existence of such eigenspinor implies that each factor of the product $M=M_1\times M_2$  satisfy that $M_2$ admits a non trivial real Killing spinor and $M_1$ admits a non trivial parallel spinor (when $n_1<n_2$) or $M_1$ admits a non trivial real Killing spinor or a non trivial parallel spinor (when $n_1=n_2$)
 
Hence we have proved that 
\begin{theorem}
If equality $$\lambda^2=\frac{n_2}{4(n_2-1)}Y(M^n,[g])$$ in previous theorem holds then the product metric $g$ is a Yamabe metric, the vector potential $A$ has constant length , $(M_2,g_2)$ admits a non trivial real Killing spinor and $(M_1,g_1)$ admits a parallel spinor (if $n_1<n_2$) or either a parallel spinor or a real Killing spinor when $n_1=n_2$.
\end{theorem}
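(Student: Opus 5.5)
The plan is to start from the integral identity (*) in the proof of Theorem 3.1, evaluated at $u=u_1$, the positive first eigenfunction realizing $I(M,g,|A|^2)$. Assume $\lambda^2=\frac{n_2}{4(n_2-1)}Y(M^n,[g])$. Since we already know $4\frac{n_2-1}{n_2}\lambda^2\geq I(M,g,|A|^2)\geq Y(M^n,[g])$, equality forces both $4\frac{n_2-1}{n_2}\lambda^2=I(M,g,|A|^2)$ and $I(M,g,|A|^2)=Y(M^n,[g])$.

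With the first equality, the bracket $-a_n\Delta u_1+s_gu_1+4\frac{1-n_2}{n_2}\lambda^2|A|^2u_1$ vanishes identically. Identity (*) then becomes a sum of nonnegative terms equal to zero: $4|T\varphi|^2$, $4(\frac1{n_1}-\frac1{n_2})|D_{(1)}\varphi|^2$, $a_n|\varphi|^2|\nabla\ln u_1|^2$, and $-a_n\langle\nabla\ln u_1,\nabla|\varphi|^2\rangle$, the last one being nonnegative after integration by the non-increasing hypothesis. Hence $T\varphi=0$, $D_{(1)}\varphi=0$ when $n_1<n_2$, and $|\varphi|^2|\nabla u_1|^2=0$. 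To conclude that $u_1$ is constant one needs $\varphi$ to vanish only on a set with empty interior. I would obtain this from the unique continuation property for $D_g\varphi=i\lambda A\cdot\varphi$, or from the twistor-type equation $T\varphi=0$, whose solutions are determined by finitely many jets. Claim \ref{Claim1} then gives that $g$ is a Yamabe metric and that $|A|$ is constant, which are items (a) and (b). In particular $s_g$ is constant, and since on a product $s_g=s_{g_1}+s_{g_2}$ with each summand depending on a different factor, $s_{g_1}$ and $s_{g_2}$ are constant as well.

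Next I would analyze $T\varphi=0$ as in the classical twistor case. Taking the component along $TM_i$ and differentiating, the equation $\nabla_X\varphi=-\frac1{n_i}X\cdot D_{(i)}\varphi$ for $X\in TM_i$ yields the integrability relation $D_{(i)}^2\varphi=\frac{n_i}{4(n_i-1)}s_{g_i}\varphi$. This follows by a Clifford contraction of the curvature identity restricted to $TM_i$, using that the curvature of a product splits. When $n_1<n_2$ we have $D_{(1)}\varphi=0$, which forces $s_{g_1}=0$ on the support of $\varphi$ and hence everywhere. When $n_1=n_2$ the two coefficients coincide. In both cases, using Lemma 2.1(3), we get $D^2\varphi=\mu\varphi$ with $\mu=\frac{n_2}{4(n_2-1)}s_g>0$.

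Set $\psi=\sqrt\mu\,\varphi+D\varphi$. Then $D\psi=\sqrt\mu\,D\varphi+\mu\varphi=\sqrt\mu\,\psi$. If $\psi=0$, I would instead use $\sqrt\mu\,\varphi-D\varphi$, which is an eigenspinor for $-\sqrt\mu$ and is nonzero whenever $\psi=0$. The eigenvalue $\pm\sqrt{\frac{n_2}{4(n_2-1)}s_g}$ is exactly the equality case of Alexandrov's eigenvalue estimate on products, so his rigidity result in \cite{Alex} yields (c) and (d).

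The main obstacle is the middle step. One must justify that $|\varphi|^2|\nabla u_1|^2=0$ forces $u_1$ to be constant, which needs a unique continuation argument. One must also verify the integrability computation for the Penrose-type operator, in particular that the cross terms between the two factors vanish, so that the eigenvalue matches Alexandrov's equality case.
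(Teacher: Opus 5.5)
Your proposal is correct and follows the paper's own argument step for step. Equality forces $4\frac{n_2-1}{n_2}\lambda^2=I(M,g,|A|^2)=Y(M^n,[g])$; identity (*) then yields $T\varphi=0$, $D_{(1)}\varphi=0$ (for $n_1<n_2$) and $u_1$ constant; Claim 1 gives that $g$ is Yamabe and $|A|$ is constant; and the integrability relations $D_{(i)}^2\varphi=\frac{n_i}{4(n_i-1)}s_{g_i}\varphi$ produce the eigenspinor $\sqrt{\mu}\,\varphi\pm D\varphi$, to which Alexandrov's rigidity result applies.

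The details you add are points the paper passes over silently: unique continuation to pass from $|\varphi|^2|\nabla u_1|^2=0$ to $u_1$ constant, $s_{g_1}=0$ when $n_1<n_2$, and the case $\sqrt{\mu}\,\varphi+D\varphi=0$. For the unique continuation step, rely on the Dirac equation rather than a finite-jet property of $T\varphi=0$, since the latter fails when $n_2=2$.
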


\section{Proof of Theorem 1.6}
The proof of next theorem is similar to the one in previous section. However, we consider this case separately because the equality part can be characterized completely.

First, we obtain the inequality \eqref{inequality2}. We include the proof for completeness
\begin{theorem}
Let $(M^n=M_1^{n_1}\times M_2^{n_2},g=g_1+g_2,\sigma)$ be a product of closed spin manifolds of dimensions $n_1\leq n_2$. Assume that the scalar curvature $s_g>0$. Let $(\varphi,f)$ be a non trivial solution of the zero mode-type equation $$D_g\varphi=\lambda f\varphi,\quad\int_M f^ndv_g=1. $$
If $|\varphi|^2$ is non-increasing with respect to $f$ then $$\lambda^2\geq \frac{n_2}{4(n_2-1)}Y(M^n,[g]).$$
\end{theorem}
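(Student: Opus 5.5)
I will mirror the proof of the vector-potential case (the theorem of Section 3), replacing $i\lambda A\cdot_g$ by multiplication by the real function $\lambda f$. The only changes occur where the potential enters the Schrödinger--Lichnerowicz step. The rest of the argument uses only the Penrose-type operator $T$, the identity \eqref{decompositionT}, the algebraic properties of $D_{(1)},D_{(2)}$ in the Lemma of Section 2, and the weighted eigenvalue problem. It goes through verbatim with $|A|^2$ replaced by $f^2$.

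\emph{Step 1.} Integrate the Schrödinger--Lichnerowicz formula against $\varphi$. Since $D_g$ is self-adjoint and $f$ is real,
$$\int_M\langle D_g^2\varphi,\varphi\rangle=\int_M|D_g\varphi|^2=\lambda^2\int_M f^2|\varphi|^2 .$$
This gives
$$0=\int_M\Big(-\lambda^2f^2+\tfrac{s_g}{4}\Big)|\varphi|^2+|\nabla\varphi|^2 .$$
In the $A$-case one used $|A\cdot\varphi|=|A||\varphi|$; here the identity is even more direct. Next insert \eqref{decompositionT}. Then use $D^2=D_{(1)}^2+D_{(2)}^2$ and the self-adjointness of $D_{(i)}$ to write
$$\int|D_{(2)}\varphi|^2=\lambda^2\int f^2|\varphi|^2-\int|D_{(1)}\varphi|^2 .$$
Multiplying by $4$, this yields
$$0=\int_M|\varphi|^2\Big(s_g-4\tfrac{n_2-1}{n_2}\lambda^2 f^2\Big)+4\int_M|T\varphi|^2+4\Big(\tfrac1{n_1}-\tfrac1{n_2}\Big)\int_M|D_{(1)}\varphi|^2 .$$
The last two terms are nonnegative because $n_1\le n_2$.

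\emph{Step 2.} For a positive function $u$, add the integration-by-parts identity for $\int a_n\langle\nabla\ln u,\nabla|\varphi|^2\rangle$. This produces the analogue of identity (*), with the bracket $-a_n\Delta u+s_gu-4\frac{n_2-1}{n_2}\lambda^2f^2u$, plus the nonnegative term $a_n\int|\varphi|^2|\nabla u|^2/u^2$. Choose $u=u_1$, the positive first eigenfunction of $-a_n\Delta u+s_gu=If^2u$, where
$$I=I(M,g,f^2)=\inf_{H^1}\frac{\int(a_n|\nabla u|^2+s_gu^2)}{\int f^2u^2}.$$
It exists and is positive since $s_g>0$ and $f\not\equiv0$. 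Then the bracket equals $\big(I-4\frac{n_2-1}{n_2}\lambda^2\big)f^2u_1$. The non-increasing hypothesis makes $-a_n\int\langle\nabla\ln u_1,\nabla|\varphi|^2\rangle\ge0$. If $4\frac{n_2-1}{n_2}\lambda^2<I$, every term is nonnegative, so $f^2|\varphi|^2\equiv0$, hence $D\varphi=0$. The identity then forces $\int s_g|\varphi|^2=0$ as well, so $\varphi\equiv0$, a contradiction. Therefore $4\frac{n_2-1}{n_2}\lambda^2\ge I$.

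\emph{Step 3.} By Hölder's inequality and $\int f^n=1$,
$$\int f^2u^2\le\Big(\int u^{2n/(n-2)}\Big)^{(n-2)/n},$$
so $I\ge Y(M^n,[g])$. This concludes the proof.

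I expect the main obstacle to be regularity and justification rather than algebra. One must ensure that $u_1$ exists as a smooth positive minimizer when the weight $f^2$ is only in $L^{n/2}$. One also needs the integrations by parts to be legitimate for $\varphi$ merely in $L^p$, which I would handle by invoking the regularity assumptions stated at the end of the introduction. A minor point is to check that no sign issue arises if $f$ changes sign; since only $f^2$ enters, this is harmless.
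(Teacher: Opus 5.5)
Your proposal is correct and follows the paper's proof step for step. The ingredients match: Schr\"odinger--Lichnerowicz combined with the Penrose-type decomposition, then identity (**) evaluated at the weighted first eigenfunction $u_1$ under the non-increasing hypothesis, then H\"older's inequality to get $I\geq Y(M^n,[g])$. Your extra argument that $f^2|\varphi|^2\equiv0$ forces $\varphi\equiv0$ because $s_g>0$ fills a step the paper leaves implicit; note also that the H\"older step uses the normalization in the form $\int_M|f|^n\,dv_g=1$, which is clearly what is intended when $f$ may change sign.
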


\begin{proof}
In the proof of  Theorem 3.1 we can replace $|A|^2$ by $f^2$ in the integral identity (*) and obtain the following integral identity (**)
\begin{align*}
0=\int_M\frac{|\varphi|^2}{u}\left(-a_n\Delta u+s_gu+4\left(\frac{1-n_2}{n_2}\right)\lambda^2f^2u\right)+\int_M 4|T\varphi|^2+\int_M 4\left( \frac{1}{n_1}-\frac{1}{n_2}\right)|D_1\varphi|^2\\
+\int_M a_n|\varphi|^2\frac{|\nabla u|^2}{u^2}-\int_Ma_n\left\langle\nabla ln(u),\nabla |\varphi|^2\right\rangle.
\end{align*}
Now consider the constant $I(M,g,f)$ defined by $$I(M,g,f)=\inf_{H^1}\frac{\int_M\left( a_n|\nabla u|^2+s_gu^2\right)dv_g}{\int_M (fu)^2dv_g}.$$

By assumption $$-\int_Ma_n\left\langle\nabla ln(u_1),\nabla |\varphi|^2\right\rangle\geq 0,$$
where $u_1$ is the positive function that realizes $I(M,g,f)$.
In particular, inserting $u_1$ in the integral identity (**) we get $$4\left(\frac{n_2-1}{n_2}\right)\lambda^2\geq I(M,g,f).$$
Note from normalization of $f$ and Hölder inequality in $\int_M (fu)^2dv_g$ that

 $$I\geq \inf \frac{\int_M\left( a_n|\nabla u|^2+s_gu^2\right)dv_g}{\left(\int_Mu^{2n/(n-2)}dv_g\right)^{(n-2)/n}}=Y(M^n,[g]). $$ 
Therefore $$\lambda^2\geq \frac{n_2}{4(n_2-1)}Y(M^n,[g]).$$

\end{proof}
 
Now, we discuss the equality case in \eqref{inequality2}.
If $$\lambda^2=\frac{n_2}{4(n_2-1)}Y(M^n,[g])$$ then
$$\lambda^2=\frac{n_2}{4(n_2-1)}I(M,g,f)$$ and $I(M,g,f)=Y(M,[g])$.

The equality $\lambda^2=\frac{n_2}{4(n_2-1)}I(M,g,f)$ and the integral identity (**) imply that $T\varphi=0,\quad D_1\varphi=0$ (if $n_1<n_2$) and $u_1$ is a constant.

The equality $I(M,g,f)=Y(M,[g])$  and $u_1$ constant occur if and only if  $f$ is a constant function and $g$ is a Yamabe metric. The proof is exactly the same of Claim \ref{Claim1}.

Thus $\varphi$ is a Dirac eigenspinor. Therefore the equation $T\varphi=0$ is equivalent to the condition that each factors of the product $M=M_1\times M_2$  satisfy that $M_2$ admits a non trivial real Killing spinor and $M_1$ admits a non trivial parallel spinor (when $n_1<n_2$) or $M_1$ admits a non trivial real Killing spinor or a non trivial parallel spinor (when $n_1=n_2$)
 
Hence we can prove that 
\begin{theorem}
The equality $$\lambda^2=\frac{n_2}{4(n_2-1)}Y(M^n,[g])$$ in previous theorem holds if and only if $g$ is a Yamabe metric, the potential $f$ is constant, $(M_2,g_2)$ admits a non trivial real Killing spinor and $(M_1,g_1)$ admits a parallel spinor (if $n_1<n_2$) or either a parallel spinor or a real Killing spinor when $n_1=n_2$.
\end{theorem}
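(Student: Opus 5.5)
We have to prove both directions of the equality characterization for $D_g\varphi=\lambda f\varphi$ with $\int_M f^n=1$ under the non-increasing hypothesis.

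\emph{Necessity.} Suppose $\lambda^2=\frac{n_2}{4(n_2-1)}Y(M^n,[g])$. The previous theorem gives the chain $4\frac{n_2-1}{n_2}\lambda^2\geq I(M,g,f)\geq Y(M^n,[g])$, so both inequalities are equalities. Insert $u_1$ into the identity (**). The first integrand then vanishes pointwise, because $-a_n\Delta u_1+s_gu_1=I f^2u_1$ and $I=4\frac{n_2-1}{n_2}\lambda^2$. All remaining terms are nonnegative: the gradient term is nonnegative by the non-increasing hypothesis. Hence each term vanishes, giving $T\varphi=0$, $D_{(1)}\varphi=0$ when $n_1<n_2$, and $|\varphi|^2|\nabla u_1|^2=0$.

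To conclude that $u_1$ is constant, I would invoke unique continuation for the Dirac equation $D\varphi=\lambda f\varphi$: since $\varphi\not\equiv0$, its zero set has empty interior, so $\nabla u_1=0$ on a dense set and $u_1$ is constant. Claim \ref{Claim1}, with $|A|$ replaced by $f$, then shows that $g$ is Yamabe, $s_g$ is constant, and $f^2=s_g/I$ is constant. The sign of $f$ must also be addressed. I expect to argue that $f$ is continuous and does not vanish, which fixes its sign, possibly after replacing $\lambda$ by $-\lambda$. Then $D\varphi=\mu\varphi$ with $\mu^2=\lambda^2f^2=\frac{n_2}{4(n_2-1)}s_g$, so $\varphi$ is a Dirac eigenspinor with $T\varphi=0$. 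Alexandrov's result from \cite{Alex} then yields the Killing spinor on $M_2$, and a parallel spinor on $M_1$, or alternatively a parallel or Killing spinor on $M_1$ when $n_1=n_2$. This direction is essentially the argument of Section 3, but simpler, since here $\varphi$ is already an eigenspinor.

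\emph{Sufficiency.} Assume conditions (a)–(d) hold. Since $M_1$ and $M_2$ carry Killing or parallel spinors, $g_i$ is Einstein with $s_{g_i}=4n_i(n_i-1)\mu_i^2$, where $\mu_i$ is the Killing number ($\mu_1=0$ in the parallel case). I would build the product spinor $\varphi=\psi_1\otimes\psi_2$, twisted by the volume element in the standard way as in \cite{Alex}. This is a Dirac eigenspinor with eigenvalue $\mu$ satisfying $\mu^2=n_1^2\mu_1^2+n_2^2\mu_2^2$.

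With $\mu_1=0$, this gives $\mu^2=\frac{n_2}{4(n_2-1)}s_g$. When $n_1=n_2$ and both factors carry Killing spinors, the Killing constants must be chosen so that the same relation $\mu^2=\frac{n_2}{4(n_2-1)}s_g$ holds; equality in the Alexandrov bound forces this.

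Now set $f=\mathrm{vol}^{-1/n}$ and $\lambda=\mu\,\mathrm{vol}^{1/n}$. Because $g$ is Yamabe with constant $s_g$, we have $Y=s_g\mathrm{vol}^{2/n}$. Hence $\lambda^2=\frac{n_2}{4(n_2-1)}Y$. The non-increasing condition holds trivially because $|\varphi|$ is constant.

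\textbf{Main obstacle.} The hardest step is this converse. It requires interpreting the statement as the \emph{existence} of a solution attaining equality, and checking the eigenvalue relation in the case $n_1=n_2$ with two Killing factors. That check may require proportionality of the Killing constants, which I would verify using Alexandrov's equality characterization.
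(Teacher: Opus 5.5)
Your necessity argument is the paper's: insert $u_1$ into (**), read off $T\varphi=0$, $D_{(1)}\varphi=0$ (if $n_1<n_2$) and $u_1$ constant, apply Claim~\ref{Claim1} to get $g$ Yamabe and $f$ constant, and finish with Alexandrov's characterization. Your unique-continuation step (from $|\varphi|^2|\nabla u_1|^2\equiv 0$ to $u_1$ constant) and your remark on the sign of $f$ make explicit two points the paper passes over. Your sufficiency argument also has the paper's structure: a product eigenspinor, $f=\mathrm{vol}(M,g)^{-1/n}$, and $Y=s_g\,\mathrm{vol}(M,g)^{2/n}$. One justification there, however, is wrong as stated.

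You verify the non-increasing hypothesis by asserting that $|\varphi|$ is constant. This fails when $n_1=n_2$ is even and both factors carry Killing spinors, e.g.\ $\mathbb{S}^2\times\mathbb{S}^2$, which is the paper's own example.

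There $D=D_1\otimes 1+\omega_1\otimes D_2$. Since $\mu_2\neq 0$, $\psi_1\otimes\psi_2$ is not itself an eigenspinor. The eigenspinor built from it is $\alpha\,\psi_1\otimes\psi_2+\beta\,\omega_1\psi_1\otimes\psi_2$ with real $\alpha\beta\neq 0$. Its squared length contains the term $2\alpha\beta\,\langle\psi_1,\omega_1\psi_1\rangle|\psi_2|^2$. On $\mathbb{S}^2$, the function $\langle\psi_1,\omega_1\psi_1\rangle=|\psi_1^+|^2-|\psi_1^-|^2$ is a nonzero first spherical harmonic for every nonzero Killing spinor, so this length is not constant. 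A constant-length eigenspinor can be found by summing such products over orthonormal bases, but your construction does not do this.

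The robust reason, which the paper uses, is that $u_1$ itself is constant. With $f$ and $s_g$ constant, the first eigenfunction of $-a_n\Delta u+s_gu=kf^2u$ is constant; the paper phrases this as $Y\le I(M,g,f)\le s_gf^{-2}=Y$. Hence $\int_M\langle\nabla|\varphi|^2,\nabla\ln u_1\rangle=0$ for every $\varphi$, with no need for $|\varphi|$ to be constant.

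Finally, the step you flag as the main obstacle is not one. For $n_1=n_2=m$ you have $s_g=4m(m-1)(\mu_1^2+\mu_2^2)$, hence $\mu^2=m^2(\mu_1^2+\mu_2^2)=\frac{m}{4(m-1)}s_g$ identically. No proportionality condition on the Killing numbers is needed.
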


\begin{proof}
The necessary part for the equality corresponds to the previous discussion. Now assume that $(M_1,g_1)$, $(M_2,g_2)$ are as before and $g=g_1+g_2$ is a Yamabe metric.

By characterization of Dirac eigenvalues on the product $(M=M_1\times M_2,g)$ there exists a spinor field $\varphi$ such that $$D_g\varphi=\lambda\varphi,$$
where $\lambda^2=\frac{n_2}{4(n_2-1)}s_g$.

Let $f=\frac{1}{(vol(M,g)^{1/n}}$. Thus $(\varphi,f)$ is a zero mode of the equation $$D\varphi=\left(\frac{\lambda}{f}\right)f\varphi.$$

Moreover, applying Hölder inequality to estimate $\int_M(fu)^2$ we have that 
\begin{align*}
Y(M^n,[g])=\inf_u \dfrac{\int_M(a_n|\nabla u|^2+s_gu^2)}{\left(\int_Mu^{2n/(n-2)}\right)^{(n-2)/n}}&\leq I(M,g,f)=\inf_u \dfrac{\int_M(a_n|\nabla u|^2+s_gu^2)}{\left(\int_M(fu)^2\right)}\\&\leq s_gf^{-2}= s_g vol(M^n,g)^{2/n}=Y(M^n,[g]). 
\end{align*}
In the last equality we used that $g$ is a Yamabe metric.

Therefore a constant function $u_1$ is the first eigenfunction of the problem $$-a_n\Delta u+s_gu=I(M,g,f)f^2u.$$
In particular $|\varphi|^2$ is non increasing along $f^2$.

Thus $(\varphi,f)$ is a zero mode for the parameter $\lambda/f$ which satisfies the equality in \eqref{inequality2}.

\end{proof} 
 
\begin{remark}
Notice that the product spin manifold $(\mathbb{S}^2\times\mathbb{S}^2,g_0+g_0)$ is one example that satisfies the equality in \eqref{inequality2}. 
\end{remark}

\end{document}